\newcolumntype{M}[1]{>{\centering\arraybackslash}m{#1}}
\numberwithin{equation}{section}
\newtheoremstyle{plainstyle} 
  {}                          
  {}                          
  {\normalfont}               
  {}                          
  {\bfseries}                 
  {.}                         
  { }                         
  {}                          
\theoremstyle{plainstyle}
\newtheorem{theorem}{Theorem}[section]
\newtheorem{conjecture}[theorem]{Conjecture}
\newtheorem{definition}[theorem]{Definition} 
\newtheorem{corollary}[theorem]{Corollary} 
\newtheorem{proposition}[theorem]{Proposition} 
\newtheorem{lemma}[theorem]{Lemma}
\newtheorem{remark}[theorem]{Remark}
\newtheorem{question}[theorem]{Question}
\title{Weakly special manifolds with no rational curves}
\author{Kyle Broder}
\address{The University of Queensland,  St. Lucia,  QLD 4067, Australia}
\email{k.broder@uq.edu.au}
\author{Frederic Campana}
\address{Institut Elie Cartan, Universit\'e Henri Poincar\'e, BP 239, F54506. Vandoeuvre-les-Nancy C\'edex, et: Institut Universitaire de France}
\email{frederic.campana@univ-lorraine.fr}
\thanks{The first named author was funded as a postdoc by the Australian Government through the Australian Research Council's Discovery Projects funding scheme (project DP220102530). }
\begin{document}

\maketitle

\begin{abstract}
 Assuming the abundance conjecture and the existence of a Zariski dense set of rational curves on terminal Calabi--Yau varieties, we show that a complex projective weakly special manifold $X$ with no rational curves is an \'etale quotient of an Abelian variety. The same conclusion holds true if $X$ contains a Zariski dense entire curve, assuming Lang's conjecture. This implies that any non-hyperbolic complex projective manifold contains the image of an Abelian variety, according to another conjecture of Lang. We illustrate this last conjecture by producing examples of canonically polarised submanifolds of abelian varieties containing no subvariety of general type, except for a finite number of disjoint copies of some simple abelian variety, which can be chosen arbitrarily. We also show, more generally, that any projective manifold containing a Zariski dense entire curve appears as the `exceptional set' in Lang's sense of some general type manifold.  
\end{abstract}

\section{Definitions, Terminology}

The base field is $\Bbb C$. 

\begin{definition}
A projective manifold (resp. variety) $X$ is \emph{weakly special} if no finite \'etale cover of $X$ admits a rational fibration onto a positive-dimensional variety of general type (resp. if some/any of its smooth models is special) .
\end{definition}

We shall use some more terminology.

Recall that an \textit{entire curve} (resp. \textit{rational curve}) in a connected complex space $X$ is a non-constant holomorphic map $\mathbf{C} \longrightarrow X$ (resp. $\Bbb P_1 \longrightarrow X$) , and that $X$ is (Brody) hyperbolic if it does not contain any entire curve.

A `terminal model' is a connected normal projective variety $X$ with $K_X$ $\Bbb Q$-Cartier, having only terminal singularities and semi-ample canonical divisor. These appear in the Minimal Model Program (exposed in \cite{KMM87} and \cite{KM}, 2.34, \S3.7 and \S5 in particular).

A normal variety $X$ is a `quotient of an Abelian variety' $X$, if there is a finite surjective map $q:A\to X$ for some Abelian variety $A$. It is an \'etale quotient if $q$ is moreover \'etale: this is equivalent to $X$ smooth if $q$ is \'etale in codimension $1$.  

A fibration is a  holomorphic, surjective map between connected normal projective spaces with connected fibres.

An Abelian group scheme $f':X'\to Z'$ is a submersive fibration with all fibres Abelian varieties, admitting a holomorphic section.

\section{Statements}

 We will show that as a consequence of the abundance conjecture (Conjecture~\ref{abund}) and the existence of a Zariski dense set of rational curves on Calabi--Yau and hyperk\"ahler terminal models (Conjecture~\ref{CY+}), the following equivalences hold: 

\begin{theorem}\label{main} Assume Conjecture~\ref{abund} and Conjecture~\ref{CY+}. Then: for a smooth projective  variety $X$, the following are equivalent: \begin{itemize}
    \item[(I)] $X$ is weakly special and contains no rational curves.
   \item[(II)] $X$ is an \'etale quotient of an Abelian variety.
   
    If we also admit Lang's conjecture [L] according to which varieties of general type do not contain Zariski dense entire curves, the above conditions are also equivalent to:
    
    \item[(III)] $X$ contains a Zariski dense entire curve and no rational curve.  
    
\end{itemize}
\end{theorem}

The proof rests on an unconditional result, combining those of \cite{hoer} and \cite{Kol}:

\begin{theorem}\label{Main} Let $f:X\to Z$ be the Moishezon-Iitaka fibration of $X$ smooth projective, to $Z$ normal. Assume that its generic fibres are bimeromorphic to smooth \'etale quotients of Abelian varieties.
 If no fibre of $f$ contains any rational curve, there exists a finite \'etale cover $u:X'\to X$, an `Abelian group scheme' $f':X'\to Z'$, and a finite map: $v:Z'\to Z$ with $v\circ f'=f\circ u:X'\to Z$. 
\end{theorem}

The referee points out that Theorem 1.1 in \cite{KLSV} establishes a related result for $Z$ a curve, and generic fibres $K$-trivial varieties with canonical singularities. 

Since the abundance conjecture is known in dimension $3$, and projective $K3$ surfaces contain rational curves, we get unconditionally from the proof of \ref{main} the following result, well-known to experts (see \cite{D'}):

\begin{corollary} A smooth projective threefold not containing any rational curve is either canonically polarised, or Calabi-Yau, or after a finite \'etale cover, an Abelian group scheme over a canonically polarised base, isotrivial if the fibres are elliptic curves. Since Calabi-Yau threefolds are conjectured to contain rational curves, the second case should not occur.
\end{corollary}

\noindent The `special' varieties\footnote{$X$ projective smooth is `special' if $\kappa(X,L)<p, \forall p>0$, for any rank-one coherent subsheaf $L$ of $\Omega^p_X$.} defined in \cite{Ca04} are weakly special, and the notions are equivalent for curves and surfaces. This equivalence fails from dimension $3$ on \cite{BT,CP}. 

From \ref{main} we also get:

\begin{corollary} Assuming Conjecture~\ref{abund} and Conjecture~\ref{CY+}, a weakly special variety without rational curve is special. 
\end{corollary}

 In \cite{Ca04}, it is conjectured that a projective variety $X$ is special if and only if it contains  a Zariski dense entire curve. The threefolds constructed in \cite{CP} are weakly special but not special, and are shown not to contain Zariski dense entire curves. They must contain rational curves.

\noindent The conjectural statements assumed in Theorem~\ref{main} are:

\begin{conjecture}\label{abund} 
If a projective manifold $X$ is not uniruled, it admits a birational terminal model $X'$. Further, $\kappa(X)=\kappa(X')$ and if $K_X$ is nef, then $X \simeq X'$.
\end{conjecture}

\noindent The terminal model is obtained from a sequence of divisorial contractions and flips \cite{KM}.  Conjecture~\ref{abund} is known for projective manifolds $X$ with $\dim_{\mathbf{C}} X \leq 3$ (see \cite{Miyaoka88, Kawamata92a, Kwc92}).

\begin{conjecture}\label{CY+}
If a terminal model $X'$ has torsion canonical bundle, and is not a quotient of an Abelian variety $A$, the union of its rational curves is Zariski dense in $X'$. 
\end{conjecture}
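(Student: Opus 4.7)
My plan is to reduce the statement to its content on the irreducible factors of the Beauville--Bogomolov decomposition, and then to treat the Calabi--Yau and hyperk\"ahler pieces separately.

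Since $X'$ has terminal singularities and torsion canonical bundle, the singular Beauville--Bogomolov decomposition (in its versions for singular varieties due to Greb--Kebekus--Peternell, Druel, H\"oring--Peternell, and Bakker--Guenancia--Lehn) provides a finite \'etale cover $\widetilde{X}\to X'$ splitting as a product $A\times \prod_i Y_i\times \prod_j Z_j$, where $A$ is an abelian variety, each $Y_i$ is a strict terminal Calabi--Yau variety, and each $Z_j$ is a terminal irreducible holomorphic symplectic (hyperk\"ahler) variety. The union of rational curves is Zariski dense in $X'$ if and only if it is so in $\widetilde X$, and on a product this is equivalent to Zariski density on each non-trivial factor. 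Since abelian varieties contain no rational curve, the hypothesis that $X'$ has no \'etale cover which is an abelian variety forces at least one $Y_i$ or $Z_j$ to be present, and the problem reduces to showing Zariski density of rational curves on each strict Calabi--Yau and on each hyperk\"ahler factor.

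For a hyperk\"ahler factor $Z_j$, I would combine the deformation theory of rational curves with the twistor family: existing theorems on uniruled divisors attached to MBM classes produce rational curves in controlled cohomology classes, and Verbitsky's ergodicity arguments on the moduli of marked irreducible symplectic manifolds propagate them across a dense set of twistor deformations; combined with the projectivity of $Z_j$ and a specialization argument, this should yield Zariski density, at least when $b_2(Z_j)>3$. For a strict Calabi--Yau factor $Y_i$, one begins with a rational curve $C_0\subset Y_i$ supplied by Conjecture~\ref{CY}, and tries to propagate it by bend-and-break and by deforming $C_0$ in characteristic zero using $K_{Y_i}\equiv 0$ to force the covering locus to grow; an auxiliary route is to specialize $Y_i$ to a maximally unipotent degeneration, construct Zariski dense families of rational curves via tropical or mirror-symmetric techniques, and then deform them back to the generic fibre.

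The principal obstacle is the strict Calabi--Yau case in dimension $\ge 3$: not only is the Zariski density of rational curves open, but even the existence of a single rational curve on a strict Calabi--Yau threefold is not currently known in general, so Conjecture~\ref{CY+} is strictly stronger than Conjecture~\ref{CY} in that range. A complete proof therefore appears to require a genuinely new construction of rational curves on Calabi--Yau varieties beyond what Conjecture~\ref{CY} supplies, whereas the hyperk\"ahler half of the reduction is essentially within reach of current techniques modulo the expected lower bound on $b_2$.
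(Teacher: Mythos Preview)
The statement you are attempting to prove is labelled as a \emph{Conjecture} in the paper, and the paper does not prove it. The only commentary the paper offers is exactly your first paragraph: via the singular Beauville--Bogomolov decomposition (the paper cites \cite{CampanaCY,Druel}), the statement reduces to the case of terminal Calabi--Yau and hyperk\"ahler varieties; beyond this, the paper merely records that the conjecture is known for K3 surfaces and is suggested by Gromov--Witten computations for general quintics in $\mathbf{P}^4$.

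Your reduction step therefore matches precisely what the paper does, and your final paragraph is the correct assessment of the situation: no proof is currently available, and the strict Calabi--Yau case in dimension $\geq 3$ is genuinely open (even the existence of a single rational curve is not known in general, so Conjecture~\ref{CY+} is strictly stronger than Conjecture~\ref{CY}). The sketches you give for the hyperk\"ahler and Calabi--Yau factors are reasonable heuristics but are not proofs, and the paper does not claim otherwise. There is thus no ``paper's own proof'' to compare against; you have correctly identified the reduction the paper states and then gone further by outlining where the obstruction to a full proof lies.
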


Notice that a general\footnote{We use since 1980 the term `general' (resp. `generic') to mean outside a countable (resp. finite) union of strict subvarieties. No need thus to add `very', introduced in 1995.} Kummer variety $A/\pm 1$ of dimension $3$ or more is a terminal model, but does not contain any rational curve (\cite{Pir}, Theorem 2), and that Kummer surfaces, which contain a Zariski dense set of rational curves, are not terminal models.

 By the singular version of the Bogomolov--Beauville decomposition theorem \cite{hp, CampanaCY}, and the references there, this reduces to the case of singular Calabi--Yau and hyperk\"ahler terminal models\footnote{Excluding quotients of Abelian varieties.}.  Conjecture~\ref{CY+} is known for K3 surfaces, many hyperk\"ahler and Calabi-Yau manifolds. Gromov--Witten invariants make it plausible for the general quintics in $\mathbf{P}^4$ (see, e.g., \cite{CdGP,Morrison}), and more generally by the Kawamata-Morrison cone conjecture. See \cite{D'} for a discussion and the treatment of some special cases.

\medskip

\noindent A simpler version of the techniques used to establish Theorem \ref{main} show the following, conjectured by Lang (\cite[Conjecture 5.7, p. 123]{Lang}):

\begin{theorem}\label{hyp} Assume Conjecture~\ref{abund}, conjecture \ref{CY+} and conjecture [L]\footnote{Recall again that Conjecture [L] says that a variety of general type does not contain any Zariski dense entire curve.} in \ref{main}.  If a projective manifold $X$ does not contain any rational curves or images of  simple Abelian varieties, then $X$ is hyperbolic and canonically polarised. 
\end{theorem}

\medskip

{\bf Exceptional loci.}\label{exc set} More generally, Lang introduced the notion of `exceptional locus' $W$ of any projective manifold $X$, as the Zariski closure of the union of all entire curves contained in $X$, and conjectured that $W\subsetneq X$ if (and only if) $X$ is of general type. The subvariety $W$ if nonempty is thus the obstruction to the hyperbolicity of $X$. It obviously contains all Zariski closures of entire curves of $X$. Theorem \ref{main} above thus explains why only Abelian varieties and rational curves should appear as minimal obstructions to the hyperbolicity of complex projective manifolds.

Since it is conjectured in \cite{Ca04} that the Zariski closures of entire curves are special, and that special varieties contain Zariski dense entire curves, it follows that $W$ should also be the Zariski closure of the union of all `special' subvarieties of $X$. A trivial example of such a $W$ is the blow-up of a smooth hyperbolic manifold $Y$ of dimension at least $2$ along a submanifold $S$: $W$ is then the exceptional divisor $E$, a projective bundle over $S$. The maximal special subvarieties are the fibres of $E$ over $S$ (which is hyperbolic).

We will conclude by producing examples illustrating Theorem~\ref{hyp}, in which $W$ does not contain rational, nor elliptic curves.

\begin{theorem}\label{ex}
For each simple Abelian variety $V\subset \Bbb P_n$, and each simple Abelian variety $A$ of dimension $n+1$, there exist canonically polarised submanifolds $X\subset A\times V$ with the following properties:

1. All subvarieties of $X$ are of general type, except for a smooth divisor  $W$, a finite union of disjoint copies of $V$.  

2. Each entire curve in $X$ is contained (and Zariski dense\footnote{ If the simple Abelian variety $V$ does not contain any strict real subtorus,  the Zariski dense entire curve is even dense in this component for the metric topology.}) in some component of $W$. 

3. The Kobayashi pseudodistance of $X$ is a distance on $X\setminus W$, and vanishes on $W$.

4. $X$ has nef and big cotangent bundle, and supports a K\"ahler metric with nonpositive bisectional curvature and a pseudoconvex Finsler metric of quasi-negative holomorphic sectional curvature.
\end{theorem}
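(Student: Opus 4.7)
The plan is to realise $X$ as a sufficiently generic smooth ample divisor in an auxiliary abelian variety $A$ containing $V$ as an abelian subvariety, arranged so that the prescribed translates of $V$ are the only abelian translates contained in $X$. Fix a simple abelian surface $T$ with $\mathrm{Hom}(V,T)=\mathrm{Hom}(T,V)=0$ (satisfied for generic $T$), and set $A:=V\times T$; up to translation, the positive-dimensional abelian subvarieties of $A$ are then precisely $V\times\{0\}$, $\{0\}\times T$ and $A$. Take ample line bundles $L_V$ on $V$ and $L_T$ on $T$, with $L_T$ so positive that $h^0(T,L_T)>k$ for the desired number $k$ of components of $W$. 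By K\"unneth, $H^0(A, L_V\boxtimes L_T)=H^0(V,L_V)\otimes H^0(T,L_T)$, and a section $s$ there can be viewed as a section of the rank-$h^0(V,L_V)$ vector bundle $H^0(V,L_V)\otimes L_T$ on $T$; this latter section vanishes at $t\in T$ iff $V\times\{t\}\subset X:=\{s=0\}$. A codimension count together with Bertini allows one to choose $s$ so that (i) this zero locus on $T$ equals a prescribed finite set $\{t_1,\dots,t_k\}\subset T$ of arbitrary cardinality, (ii) no horizontal fiber $\{v\}\times T$ lies in $X$, and (iii) $X$ is smooth. Adjunction gives $K_X=X|_X$ ample, so $X$ is canonically polarised.

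By the Ueno--Kawamata structure theorem, any subvariety $Z\subset A$ that fails to be of general type contains a translate of some positive-dimensional abelian subvariety of $A$. By construction, the only such translates contained in $X$ are the prescribed $V_j:=V\times\{t_j\}$; setting $W:=\bigsqcup_j V_j$, every subvariety of $X$ not contained in $W$ is therefore of general type. Bloch--Ochiai--Kawamata then forces every entire curve in $X$ to lie in some $V_j$, and every abelian variety carries Zariski dense entire curves, yielding the first two claims of the theorem. The Noguchi--Winkelmann--Yamanoi theorem on Kobayashi pseudodistances of subvarieties of semi-abelian varieties shows that $d_X$ is a genuine distance on $X\setminus W$ and vanishes on $W$.

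For the differential-geometric statements, the quotient $\Omega_A|_X\twoheadrightarrow\Omega_X$ (with $\Omega_A$ trivial) exhibits $\Omega_X$ as globally generated, hence nef; bigness follows since, by Ran's theorem, the Gauss map of $X$ into the Grassmannian is generically finite on $X\setminus W$, so that $\mathcal{O}_{\mathbb{P}(\Omega_X)}(1)$ is big. Restricting a flat K\"ahler metric on $A$ to $X$ yields, by the Gauss equation, a K\"ahler metric with non-positive holomorphic bisectional curvature, strictly negative in directions transverse to $W$. Finally, the Kobayashi--Royden infinitesimal pseudometric is an upper semi-continuous pseudoconvex Finsler pseudometric, non-degenerate on $X\setminus W$, vanishing along $W$, and satisfying an Ahlfors--Schwarz inequality; this forces its holomorphic sectional curvature to be $\leq -1$ on $X\setminus W$, hence quasi-negative on $X$.

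The most delicate step is verifying that a generic section $s$ realising the imposed vanishing at $\{t_1,\dots,t_k\}$ produces an $X$ containing \emph{no} further translates of abelian subvarieties of $A$. This needs a careful parameter-count over the countable family of translates of $V$ and of $T$ in $A$, together with a Bertini-style argument that each non-prescribed translate imposes a genuine codimension condition on the linear subsystem cutting out $X$. Establishing bigness of $\Omega_X$ independently of the ampleness of $K_X$ is a secondary technical point, most cleanly handled via Ran's generic finiteness of the Gauss map, or equivalently via Debarre-type vanishing for symmetric powers on subvarieties of abelian varieties.
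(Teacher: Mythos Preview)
Your construction has a fatal obstruction that the paper's cone-and-blow-up construction is specifically designed to avoid. You want $X$ to be a \emph{smooth} divisor in $A=V\times T$ (with $\dim T=2$) in an ample class $L_V\boxtimes L_T$, containing a fibre $V_j:=V\times\{t_j\}$. If $X$ were smooth along $V_j$, the normal bundle sequence
\[
0\longrightarrow N_{V_j/X}\longrightarrow N_{V_j/A}\cong \mathcal{O}_V^{\oplus 2}\longrightarrow N_{X/A}\vert_{V_j}\cong L_V\longrightarrow 0
\]
would give a surjection $\mathcal{O}_V^{\oplus 2}\twoheadrightarrow L_V$. Taking Chern classes forces $c_1(L_V)^2=0$, which is impossible for an ample line bundle on $V$ once $\dim V\geq 2$ (equivalently, such a surjection is a basepoint-free pencil, hence a finite morphism $V\to\mathbf{P}^1$, forcing $\dim V=1$). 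Since on a simple abelian variety every nontrivial effective class is ample, you cannot escape by weakening the positivity of $L_V$; and $\dim T=2$ is forced if the copies of $V$ are to be divisors in $X$. So your $X$ is necessarily singular along each $V_j$ whenever $\dim V\geq 2$, exactly the range the theorem is interesting for. The paper gets around this by first producing a \emph{singular} hypersurface $X_0\subset A$ (the preimage under a finite map $A\to\mathbf{P}^{n+1}$ of a cone over $V$) and then blowing up the finitely many cone points: the copies of $V$ arise as exceptional divisors of this blow-up, and $X$ sits in $A\times V$ rather than in $A$ alone. The nef-and-big cotangent bundle is then obtained from Debarre's non-degeneracy criterion for subvarieties of abelian varieties applied to $X\subset A\times V$.

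A secondary issue: your Finsler argument is not correct as stated. The Kobayashi--Royden pseudometric is only upper semicontinuous and has no curvature in the sense needed; moreover the Ahlfors--Schwarz lemma runs in the opposite direction (metrics with $\mathrm{HSC}\leq -1$ are dominated by the Kobayashi metric, not the converse). The paper instead uses that $X_0\subset A$ has ample cotangent bundle, invokes Kobayashi's explicit negatively curved pseudoconvex Finsler metric on such varieties, pulls it back to the smooth locus of $X$, cuts it off, and adds it to the restricted flat metric via Wu's sum theorem to get quasi-negativity.
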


Notice that if $\dim(V)\geq 2$, then $X$ does not contain rational, or elliptic curves, and each entire curve on $X$ is transcendental. We are not aware of such examples in the previous literature.

The construction used in the proof of Theorem \ref{ex} also gives, more generally, the following examples:

\begin{theorem}\label{ex'} Let $V\subset \Bbb P_n$ be a projective manifold containing a Zariski dense entire curve. Let $A$ be a simple abelian variety of dimension $n+1$. There exists in $B:=A\times V$ a submanifold $X$ of general type such that each entire curve of $X$ is contained  in a smooth divisor $W$, finite union of disjoint copies of $V$.  The Kobayashi pseudodistance on $X$ is a distance on $X\setminus W$, conjecturally vanishes on $W$ (see Remark~\ref{d}), and $W$ is the `exceptional locus' of $X$ (in the sense of Lang). 
\end{theorem}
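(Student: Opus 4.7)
The strategy is to adapt the construction used in the proof of Theorem~\ref{ex}, replacing one abelian factor by the (non-abelian) manifold $V$ while retaining a simple abelian factor $A$ to drive the hyperbolicity analysis. For concreteness take $\dim A = 2$. Fix distinct points $a_1,\dots,a_k\in A$ and pick very ample line bundles $L_A$ on $A$ and $L_V$ on $V$ of sufficiently high degree. Choose a basis $\{\beta_i\}_{i=1}^N$ of $H^0(V,L_V)$ and set $L := p_A^*L_A\otimes p_V^*L_V$ on $B:=A\times V$. Via the K\"unneth decomposition, sections of $L$ vanishing along $\bigsqcup_j\{a_j\}\times V$ are exactly the $s = \sum_i \alpha_i\otimes\beta_i$ with $\alpha_i\in H^0(A, L_A\otimes I_{\{a_1,\dots,a_k\}})$. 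For a very general such $s$, set $X := \{s=0\}$ and $W:=\bigsqcup_j \{a_j\}\times V$.

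A Bertini argument together with a local computation of $ds$ at points of $W$ (using standard jet-separation of $L_A\otimes I_{\{a_j\}}$ and the embedding $\phi_\beta : V\hookrightarrow \mathbb{P}^{N-1}$) shows that $X$ is smooth and $W\subset X$ is a smooth divisor, a disjoint union of $k$ copies of $V$. By adjunction, $K_X = (p_V^*K_V + L)|_X$, which is big for $L$ ample of high enough degree; hence $X$ is of general type. Moreover, each $\{a_j\}\times V$ contains a lift of the given Zariski-dense entire curve of $V$, so $W$ is contained in the Lang exceptional locus of $X$; the reverse inclusion is the content of Step 3.

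The main content is Step 3: every non-constant entire curve $f = (f_A,f_V) : \mathbf{C}\to X$ lies in $W$. Apply the Bloch--Ochiai theorem to $f_A : \mathbf{C}\to A$; its Zariski closure in $A$ is a translate of an abelian subvariety, so simplicity and $\dim A = 2$ force this closure to be either a single point or all of $A$. If it is a point $a\in\{a_j\}$, then $f(\mathbf{C})\subset W$ and we are done. If it is a point $a\notin\{a_j\}$, then $f_V$ maps into the proper divisor $D_a:=\{s(a,\cdot)=0\}\subsetneq V$; we exclude this by the very general choice of the $\alpha_i$. Indeed, for each irreducible subvariety $Y\subsetneq V$ admitting a Zariski-dense entire curve, the condition ``$Y\subset D_a$'' translates (after identifying $H^0(V,L_V)$ with $\mathbf{C}^N$ via $\{\beta_i\}$) into the requirement that $(\alpha_i(a))_i$ lie in the linear subspace $R_Y := \ker(H^0(V,L_V)\to H^0(Y,L_V|_Y))$. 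For $L_V$ sufficiently positive ($\dim R_Y\leq N-2$) and the $\alpha_i$ very general, the locus of $a\in A$ with this property has codimension at least $2$ and is contained in $\{a_1,\dots,a_k\}$; a Baire-type argument over the countably many irreducible components of the Hilbert scheme of $V$ handles all such $Y$ simultaneously.

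The main obstacle is the remaining case in which $f_A$ has Zariski-dense image in $A$. Here the plan is to produce a Finsler pseudometric of quasi-negative holomorphic sectional curvature on $X$ with degeneracy locus $W$, in analogy with Theorem~\ref{ex}. From the conormal sequence
\[
0 \to L^{-1}|_X \to p_A^*\Omega_A^1|_X \oplus p_V^*\Omega_V^1|_X \to \Omega_X^1 \to 0
\]
and the triviality $p_A^*\Omega_A^1|_X \cong \mathcal{O}_X^{\oplus 2}$, the image of the abelian factor in $\Omega_X^1$, suitably twisted by $L|_X$, receives an ample contribution supplying negative holomorphic sectional curvature along the $A$-directions on $X\setminus W$; this is the subtle point because the $V$-part carries no a priori positivity and all negativity must come from $A$ and from the twist by $L|_X$. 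Demailly's Ahlfors--Schwarz lemma then forces $f_A$ to be constant, contradicting the Zariski-density assumption. With Step 3 in hand, $W$ equals the Lang exceptional locus of $X$, and Kobayashi's compactness criterion yields that the pseudodistance on $X$ is a distance on $X\setminus W$.
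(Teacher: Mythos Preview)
Your construction is genuinely different from the paper's, and the differences are exactly where your argument breaks down. In the paper the abelian factor $A$ is taken of dimension $n+1$, large enough that the cone construction produces a birational morphism $\alpha:X\to X_0$ onto a \emph{proper} subvariety $X_0\subsetneq A$. Since $A$ is simple, Bloch's theorem makes $X_0$ Brody (hence Kobayashi) hyperbolic, so every entire curve on $X$ is sent by $\alpha$ to a point and therefore lies in a fibre of $\alpha$; these fibres are points except over $\psi^{-1}(v)$, where they are copies of $V$. The Kobayashi statement is then the one-line inequality $d_X\geq \alpha^*(d_{X_0})$. No analysis of fibres $D_a\subset V$, and no Finsler metric, is needed.

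In your setup with $\dim A=2$, the projection $p_A:X\to A$ is surjective (each fibre is the nonempty divisor $D_a$ or all of $V$), so there is no hyperbolic target to push entire curves into, and both of your residual cases are genuinely open. For the case $f_A\equiv a\notin\{a_j\}$, your Baire argument does not close: what you must exclude is the existence of \emph{any} entire curve in $D_a$, i.e.\ Brody hyperbolicity of $D_a$ for every $a\notin\{a_j\}$. But the $D_a$ form a positive-dimensional algebraic family in $|L_V|$, and one cannot arrange all members of such a family to be hyperbolic by a countability argument; the condition ``$Y$ carries a Zariski-dense entire curve'' is transcendental, so counting Hilbert-scheme components does not bound the relevant $Y$'s, and even for a single fixed $Y$ your claim that the bad locus is \emph{contained in} $\{a_1,\dots,a_k\}$ (rather than merely finite) is unjustified. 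For the case $f_A$ Zariski-dense in $A$, the sketched Finsler construction cannot work as stated: the conormal sequence gives you negativity only in the $A$-directions, and the Ahlfors--Schwarz lemma needs an upper curvature bound along the actual tangent direction of $f$, which generically has a nontrivial $V$-component carrying no curvature sign. The paper avoids both obstacles by making the map to $A$ generically finite onto a strict subvariety; if you want to keep a hypersurface-in-$A\times V$ model, you would need $\dim A>\dim V$ and a mechanism forcing $p_A|_X$ to be generically finite with hyperbolic image, which is precisely what the cone construction supplies.
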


Theorem~\ref{ex'} shows that any projective manifold $V$ carrying a Zariski dense entire curve appears as the exceptional set of some `generically hyperbolic' projective manifold of general type. Recall that these manifolds $V$ are conjecturally characterised in algebro-geometric terms as the `special' manifolds in \cite{Ca04} to which we refer for details.

The examples in \ref{ex}, \ref{ex'}, are also related to questions posed by Diverio \cite{DiverioSurvey} in relation to the Wu--Yau theorem \cite{WuYau,TosattiYang}.  Recall that the Wu--Yau theorem states that a compact K\"ahler manifold with a K\"ahler metric of negative holomorphic sectional curvature is canonically polarised.  Diverio--Trapani \cite{DT} showed that this is still  true for quasi-negative holomorphic sectional curvature. We do not know whether the examples constructed in Theorem~\ref{ex} support K\"ahler (or even Hermitian) metrics of quasi-negative holomorphic sectional curvature (see Question~\ref{QN}).

{\bf Aknowledgements:} We thank the referee for indicating the references \cite{D'}, \cite{KLSV}, the example in Remark \ref{rem ex}, and for asking clarification of the r\^ole of Conjectures \ref{CY+} and \ref{abund} in the proof of Theorem \ref{main}, leading us to Theorem \ref{Main}, which simplifies and strengthens our previous approach. We thank Valery Alexeev for Claim 2, which plays a crucial r\^ole in the proof of Theorem \ref{Main}, and Klaus Hulek for the reference \cite{EDS}.

\section{Proof of Theorems \ref{Main} and \ref{main}.}

\begin{proof} (of Theorem \ref{Main}).
It is proved in \cite{hoer} for projective manifolds with nef cotangent bundle (which do not contain rational curves). We adapt his proof here. For this, it suffices to show that over the discriminant locus $\text{disc}(f) \subset Z^{\text{reg}}$ of $f$,  the fibers are multiples of \'etale Abelian quotients.  Once this has been established,  the proof is similar to the one of \cite{hoer}, which relies on three highly non-trivial ingredients: \cite {kaw'} for the equidimensionality of the fibres, \cite[Theorem 6.3]{Kol} which rests on the finiteness of the local monodromies around $\text{disc}(f)$ and Deligne's results on the variations of Hodge structures \cite{Del}.

The assumptions of Theorem \ref{Main} thus say that the generic smooth fibres of $f$ are \'etale quotients of Abelian varieties. We next show that over the discriminant locus $\text{disc}(f) \subset Z^{\text{reg}}$ of $f$,  the fibers are multiples of \'etale quotients of Abelian varieties. For this we may restrict to generic one-dimensional complex disc in $Z^{reg}$ meeting transversally $\text{disc}(f)$ in one  point, and thus need to show the following:

{\bf Claim 1:} If $g:Y\longrightarrow \mathbf{D}$ is a projective fibration from the smooth $Y$ over the unit disk $\mathbf{D}$, with all of its fibers outside of the central one \'etale quotients of Abelian varieties, if $Y$ does not contain rational curves, then the central fiber is a multiple of a smooth manifold which is an \'etale quotient of an Abelian variety.

\begin{proof} (of the Claim:) let $Y_0:=g^*(0)$ be the central fibre of $g$, and $R\subset Y_{0,red}$ be the singular set of the reduction of $Y_0$: it has codimension at least $1$ in $Y_0$, hence at least $2$ in $Y$. We thus have a natural isomorphism $\pi_1(Y)\simeq \pi_1(Y\setminus R)$ since $Y$ is smooth. 

From Step 1 of the proof of \cite{hoer}, Lemma 2.2, we know that $Y_{0,red}$ is irreducible. Thus $g^*(0)=s.(Y_{0,red})$, $s>0$ the multiplicity of $Y_0$.

Let $r:D'\to D, r':D''\to D'$ be cyclic covers of degree $s, k$ of $D,D'$ branched over the single point $0$, $D',D''$ being discs. Let $Y',Y''$ be the normalisations of the fibre product $Y\times_DD'$, $Y''=Y'\times D'D''$, equipped with their natural maps: $u:Y'\to Y, u':Y''\to Y$, finite, and $g':Y'\to D', g'':Y''\to D''$. We next choose $k$ so that the fibres $Y''_{z''}$ of $g''$ over $0\neq z''\in D''$ are smooth Abelian varieties (\'etale covers of $g^{-1}(z):=Y_z,z=(r\circ r'')(z'')\in D$). By the choice of $s$, $Y'_0:=(g')^{-1}(0)$ is reduced, hence smooth ouside $Q:=g^{-1}(R)$ since $u:(Y'\setminus Q)\to (Y\setminus R)$ is \'etale, corresponding to a finite subgroup $G$ of finite index $s$ inside $\pi_1(Y\setminus R)$. From the isomorphism $\pi_1(Y)\simeq \pi_1(Y\setminus R)$, we deduce a finite \'etale cover $Y^{(s)}\to Y$ of degree $s$ which coincides with $Y'$ over $Y\setminus R$, from which we get the equality $Y^{(s)}=Y'$ since both are normal. In particular, $Y'$ is smooth, and $u:Y'\to Y$ is \'etale everywhere. 

A local computation (valid in any dimension, see for example \cite{BPV}, III.9.1,9.2) shows that over $Y\setminus R$, the map $u'$ is \'etale outside of the central fibres $Y''_0$ and $Y'_0$, and totally ramified of degree $k$ over the central fibres. By the lower semi-continuity of the number cardinality of fibres of a finite map,  $u':Y''\to Y'$ ramifies at order $k$ over $Y''_0:=(g'')^{-1}(0)$, is \'etale ouside of the central fibres, and $u':Y''_{0,red}\to Y'_{0,red}$ is set-theoretically isomorphic. 

We now use the following crucial result and argument, communicated to us by Valery Alexeev (\cite{A}):

{\bf Claim 2:} (V. Alexeev) There exists a fibration $g^*:Y^*\to D''$ which coincides with $g'':Y''\to D"$ over $(D'\setminus \{0\})$, which is bimeromorphic to $g'':Y''\to D''$ over $D''$, and such that its central fibre $Y^*_0$ is a semi-toric variety. Moreover, $Y^*$ has only abelian quotient singularities. If $Y^*$ is not uniruled, then $g^*:Y^*\to D''$ is a smooth fibration with all its fibres Abelian varieties.

Proof of Claim 2 (after V. Alexeev: \cite{A}): it is a consequence of a construction by Mumford (see \cite{Mu},\cite{AN}, a detailed exposition is given in \cite{EDS}, especially section 2.5 about the toroidal compactifications of Siegel spaces): each component of $Y^*_0$ is a variety on which $G_0$, a semi-Abelian group variety (extension of an abelian variety by some $(\Bbb C^*)^r$) acts with an open orbit. The singularities of $Y^*$ are quotient abelian by considering the subdivisions of a fan in Mumford's construction. If the toric rank $r=r(G_0)$ is positive, these components are uniruled. Hence Claim 2.

\medskip

We can now conclude the proof of Claim 1: Lemma \ref{Phi} below applied to $X'=Y^*$ and $X=Y''$ over $Z=D''$ shows that $\phi:Y^*\to Y''$ is regular. Since $Y''_0$ were uniruled if $Y_0^*$ were uniruled, we see that $Y^*$ does not contain rational curves in the fibres of $g^*$, and so is a smooth fibration isomorphic to $Y''$, which is thus smooth. 

Thus $u:Y''_{0,red}=Y'_{0,red}\to Y $is \'etale, and since all the fibres of $g''$ are Abelian varieties, $Y_0$ is an \'etale undercover of the Abelian variety $Y'_0$. Claim 1 is established.\end{proof}

\begin{lemma}\label{Phi} Let $f':X'\to Z'$ and $f:X\to Z'$ be fibrations with $X', X$ projective normal. Assume that there exists a birational map $\Phi:X'\dasharrow X$ such that $f\circ \Phi=\Phi'$, and that $X$ does not contain any rational curve in the fibres of $f$. 

1. If the singularities of $X'$ are quotient\footnote{Log-terminal singularities suffice, in fact.}, then $\Phi:X'\to X$ is regular (i.e: holomorphic). 

2. If $X$ has quotient singularities and $X'$ does not contain rational curves, then $\Phi$ is isomorphic. 

3. Taking $Z'$ a point, if $X$ does not contain any rational curve, and if $X'$ is a terminal quotient of an Abelian variety, then $X$ and $X'$ are isomorphic. 
\end{lemma}

\begin{proof}1. The singularities of $X'$ being quotient, if $X"$ is the normalisation of the graph of $\Phi$ inside $X'\times X$, its fibres over $X'$ are chain-rationally connected. They thus project to points in $X$. Thus $X"=X'$ by normality of $X'$, and $\Phi$ is the projection of $X"=X$ onto $X'$.

2. Apply Claim 1 to $\Phi^{-1}$.

3. The singularities of $X'$ are quotient since there is $q:A\to X'$ finite, surjective, \'etale in codimension $1$ (\cite{Ca FST}, \S5), and $\Phi: X'\to X$ is thus regular. We need to show that it is isomorphic.  Given that $q: A\to X'$ is a finite undercover of an Abelian variety $A$,  there are no subvarieties of $X'$ that contract to a point in $X$ by $\Phi$ (indeed: their inverse images in $A$  would move under the translations of $A$, and fibres of a birational map do not move ouside of the exceptional locus $E\subset A$ of $\Phi\circ q:A\to X$: if $C\subset A$ were a curve mapped to a point, if $(t+A)\subsetneq E, t$ a translation in $A$, then  its image in $X$ were a curve and so $0=(\Phi\circ q)_*(C)=(\Phi\circ q)_*(t+C)\neq 0$, contradiction). The map $\Phi$ is thus biregular, proving the claim.  
\end{proof}

We now briefly describe the steps of the proof of \cite{hoer}, Lemma 2.2, which concludes the proof of Theorem \ref{Main}:
By \cite{kaw'}, the fibration $f:X\to Z$ has equidimensional fibres. Let $S\subset Z$ be set over which the fibres do not have a smooth reduction, its codimension in $Z$ is $2$ or more, and let $T=f^{-1}(S)\subset X$. Then $\pi_1(X\setminus T)=\pi_1(X)$, from which follows that the local monodromies of $f$ around $disc(f)$ are finite.  From this follows, by \cite{Kol}, that there exists a finite \'etale cover $u:X'\to X$, $f':X'\to Z'$, $v:Z'\to Z$, as in the statement of Theorem \ref{Main}, such that $f':X'\to Z'$ is birational over $Z'$ to an Abelian group scheme $f":X"\to Z'$. One then gets from Lemma \ref{Phi} that $X'$ is isomorphic to $X"$, and thus that $f':X'\to Z'$ is itself an Abelian group scheme. \end{proof}





\begin{proof} (of Theorem\ref{main}). That II implies I and III is obvious. 

We first prove that I implies II. Since $X$ contains no rational curve, $K_X$ is nef, hence semi-ample by Conjecture \ref{abund}. Let $f:X\to Z$ be its Moishezon-Iitaka fibration. Its smooth fibres have torsion canonical bundle and do not contain rational curves. Applying Theorem \ref{Main}, we may replace $f:X\to Z$ by a finite \'etale cover $X'$, still weakly special, and $f$ by $f':X'\to Z'$ which is an Abelian group scheme. We have for the Kodaira dimensions: $dim(Z')=dim(Z)=\kappa(X)=\kappa(X')$. We thus get a holomorphic moduli map $\mu:Z\longrightarrow \mathcal{A}_{g,P}$ to the moduli space of Abelian varieties of dimension $g:=\dim(X)-\dim(Z)$ with a suitable polarization. The image is a compact subvariety of general type (this moduli space carries a Hermitian metric with Griffiths semi-negative curvature and negative holomorphic sectional curvature, see \cite[Lemma 5.9.3]{Kol}).  
If $X$, and so $Z,X',Z'$ are weakly special, the moduli map $\mu$ is constant and $f'$ is isotrivial;  hence $\dim(Z)=\kappa(X)=\kappa(Z)$.  Therefore $Z'$ is of general type and special.  Thus $\dim(Z')=0$, and $X$ is the (only) fiber of $f$, hence an Abelian variety (after \'etale cover).

We next prove that III implies II. If $X$ contains a Zariski dense entire curve, so do $Z'$ and $\mu(Z')$, in the notations above. Since $Z'$ is of general type, it must have dimension zero by Lang's conjecture [L]. The conclusion is the same as in the previous proof.

\end{proof}

\section{Proof of Theorem~\ref{hyp}.}

\begin{proof}[Proof of Theorem~\ref{hyp}] 
Proceed by contradiction and suppose that $f: \mathbf{C} \longrightarrow X$ is an entire curve.   Let $V : = \overline{f(\mathbf{C})}^{\text{Zar}}$ denote its Zariski closure.  We choose the entire curve such that the dimension $d \geq 1$ of $V$ is minimal.  Since we assumed that $X$ does not support any rational curves or images of Abelian varieties, $d >1$.  Let $V^{\text{norm}}$ denote the normalization of $V$.  Since $V$ is not uniruled,  the Kodaira dimension of $V^{\text{norm}}$ is nonnegative, by Conjecture~\ref{abund}.  

The singularities of $V$ can be resolved to produce a smooth model $V^{\text{smth}}$. Running the minimal model program then yields a terminal good minimal model $V^{\text{term}}$ of $V^{\text{smth}}$.  The canonical bundle of the terminal model is nef,  and hence,  semi-ample by the abundance conjecture.  Let $\Phi : V^{\text{term}} \longrightarrow W$ denote the Moishezon--Iitaka map.  The generic fiber $V_w^{\text{term}} = \Phi^{-1}(w)$ has terminal singularities and vanishing Kodaira dimension.  Hence, by the abundance conjecture,  the canonical bundle of $V_w^{\text{term}}$ is torsion.  Since $V$ does not contain any rational curve, $V_w^{\text{term}}$ does not contain a Zariski dense set of rational curves.  Hence,  by Conjecture~\ref{CY+},  it is a terminal quotient of an Abelian variety.  In particular,  $V_w^{\text{term}}$ contains Zariski dense entire curves.   Since the dimension $d$ is chosen to be minimal,  we must have $V_w^{\text{term}} \simeq V^{\text{term}}$, and therefore, $W$ is a point, and $V^{\text{norm}}$ is a terminal quotient of a simple Abelian variety.  

From Lemma \ref{Phi}, item 3, we get: $V^{\text{norm}} \simeq V^{\text{term}}$, and so $V^{norm}$ is a terminal undercover of an Abelian variety, contradicting our assumptions. Thus $X$ is (Brody) hyperbolic. 

We finally show that $K_X$ is ample. Since $X$ does not contain rational curves, $K_X$ is nef (by \cite{Mori}), hence semi-ample (by Conjecture \ref{abund}). If $\Phi:X\to Z$ is the Moishezon-Iitaka fibration, its smooth fibres have torsion canonical bundle and no rational curve, hence are \'etale undercovers of Abelian varieties. Contradiction unless $X=Z$, which means that $K_X$ is ample, as claimed.
\end{proof}

\begin{remark}\label{rem cy} 
In the proof of Theorem~\ref{hyp}, we only made use of the following weaker form of Conjecture~\ref{CY+}: Let $V^{\text{norm}}$ be a normal projective variety with no rational curves.  Let $V^{\text{term}}$ be a terminal model with torsion canonical bundle,  and write $\psi : V^{\text{term}} \dashrightarrow V^{\text{norm}}$ for the rational map.  Let $\text{Exc}(\psi)$ and $\Gamma(\psi)$ denote the exceptional locus and graph of $\psi$, respectively.  If all rational curves of $V^{\text{term}}$ are contained in the image of $\text{Exc}(\psi)$ under the projection $ \Gamma(\psi) \longrightarrow V^{\text{term}}$,  then $V^{\text{term}}$ is a terminal quotient of an Abelian variety.

\end{remark}

\section{Examples: The Proof of Theorems ~\ref{ex} and \ref{ex'}.}
\noindent In this section,  we prove Theorems~\ref{ex} and \ref{ex'},  producing examples of canonically polarised manifolds with exceptional locus (in the sense of Lang) any given simple Abelian variety, or more generally manifolds with Zariski dense entire curves.

\subsection*{The Construction}
\noindent Let $V \subset \mathbf{P}^n$ be a projective manifold later chosen to be a simple Abelian variety of dimension $d$, or any projective manifold containing a Zariski dense entire curve.  Let $W_0 \subset \mathbf{P}^{n+1}$ be the cone over $V$ with vertex $v$.  Let $\beta : P \longrightarrow \mathbf{P}^{n+1}$ be the blow-up of $v$ in $\mathbf{P}^{n+1}$.  Write $\widetilde{W_0}$ for the strict transform of $W_0$ in $P$.    We note that $\pi : W'_0 \longrightarrow W_0$ is the blow-up $v$ in $W_0$,  with exceptional divisor $\pi^{-1}(v) \simeq V$.  

We have a natural inclusion of $P$ in $\mathbf{P}^{n+1}\times \mathbf{P}^n$.  Let $\lambda: P\longrightarrow \mathbf{P}^n$ be the second projection, inducing the linear projection of $\mathbf{P}^{n+1}$ to $\mathbf{P}^n$ (undetermined at $v$).  The restriction $\lambda': W_0'\longrightarrow V$ of $\lambda$ to $W_0'$ is,  therefore,  a $\mathbf{P}^1$-bundle.

Let now $\psi:A\longrightarrow \mathbf{P}^{n+1}$ be a surjective, finite map from a simple, $(n+1)$-dimensional Abelian variety $A$ onto $\mathbf{P}^{n+1}$. We assume that $\psi$ is unramified over $v$, vertex of $W_0$.

Let $A':=A\times_{\mathbf{P}^{n+1}}P$ with its two projections $\alpha:A'\longrightarrow A$ and $\pi:A'\longrightarrow P$.  The map $\alpha$ is the blow-up of $A$ at the finitely many points of $\psi^{-1}(v)$.  Let $X_0:=\psi^{-1}(W_0)\subset A$, and $X$ be the strict transform of $X_0$ in $A'$.  This is merely the blow-up of $X_0$ at $\psi^{-1}(v)$.  By a Bertini-type theorem, we can move $W_0$ by a generic $g\in \text{Aut}(\mathbf{P}^{n+1})$  in order to make $X_0$ smooth outside of $\psi^{-1}(v)$, or equivalently,  to ensure that $X$ is smooth. 

The proof of Theorem \ref{ex} follows from the next: 

\begin{proposition}\label{lem ex} Choose for $V$ a simple Abelian variety.
The projective manifold $X$ constructed above is canonically polarised, does not contain any rational curve and any elliptic curve if $d\geq 2$. Any entire curve in $X$ is Zariski dense in some of the exceptional divisors $V_j\cong V$ of $\alpha:X\longrightarrow X_0$ which lie over $\psi^{-1}(v)$ and are the only subvarieties of $X$ not of general type. Moreover, the cotangent bundle of $X$ is nef and big, and the Kobayashi pseudodistance of $X$ is a (non-degenerate) distance on $X\setminus W$, and vanishes on $W$, the union of the $V_j's$. 
\end{proposition}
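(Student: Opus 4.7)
The plan is to study $X$ through the birational morphism $\alpha|_X \colon X \to X_0 \subset A$, whose exceptional locus equals $\bigsqcup_j V_j$ with each $V_j$ lying over $p_j \in S := \psi^{-1}(v)$. As a first step I would pin down the normal bundle: since $\psi$ is unramified at $v$, the exceptional $\mathbf{P}^n$-divisor $F_j$ of $\alpha \colon A' \to A$ over $p_j$ meets $X$ transversely along $V_j$, whence
\[ N_{V_j/X} \;\cong\; N_{F_j/A'}\big|_{V_j} \;\cong\; \mathcal{O}_{\mathbf{P}^n}(-1)\big|_V \;\cong\; \mathcal{O}_V(-1). \]
Adjunction then gives $K_X|_{V_j} \cong \omega_V \otimes \mathcal{O}_V(1) \cong \mathcal{O}_V(1)$, which is ample on the abelian variety $V$; and the conormal sequence shows $\Omega_X^1|_{V_j}$ is an extension of $\mathcal{O}_V^{\oplus d}$ by $\mathcal{O}_V(1)$, hence nef.

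The absence of rational or elliptic curves in $X$ (for $d \geq 2$) and the classification of subvarieties both rest on the dichotomy that $\alpha|_X$ either contracts the given subvariety into $\bigsqcup_j V_j$ or maps it finitely onto its image in $X_0 \subset A$. For rational and elliptic curves: a contracted such curve would lie in some $V_j \cong V$, impossible since $V$ is simple abelian of dimension $\geq 2$; a non-contracted one would map finitely to a rational or elliptic curve in $A$, also impossible since $A$ has no rational curves, and its simplicity together with $\dim A \geq 2$ excludes $1$-dimensional abelian subvarieties. For a positive-dimensional subvariety $Y \subset X$: either $Y \subset V_j$, in which case $Y = V_j$ by simplicity of $V$; or $\alpha(Y) \subsetneq A$ is a proper positive-dimensional subvariety of the simple abelian $A$, hence of general type by Ueno's theorem, and $Y$ inherits general type via the birational map $Y \to \alpha(Y)$. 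Taking $Y = X$ itself, $X$ is of general type and $K_X$ is big.

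For the nef and big cotangent bundle and the ampleness of $K_X$: the triviality $\Omega_A \cong \mathcal{O}_A^{\oplus(n+1)}$ pulls back under $\alpha$ to a morphism $\mathcal{O}_X^{\oplus(n+1)} \to \Omega_X^1$ surjective on $X \setminus \bigcup_j V_j$, and combined with the nef behaviour of $\Omega_X^1|_{V_j}$ established above this gives $\Omega_X^1$ nef globally; bigness follows from $K_X$ big. By Mori's theorem, the absence of rational curves forces $K_X$ nef; the base-point-free theorem applied to the nef and big $K_X$ yields a birational morphism $\phi \colon X \to Y$ onto a normal projective $Y$. A standard result of Kawamata, refining Mori's cone theorem, asserts that the exceptional locus of such a canonical morphism of a smooth projective variety with nef and big $K_X$ is covered by rational curves; since $X$ contains no rational curves this exceptional locus is empty, $\phi$ is an isomorphism, and $K_X$ is ample.

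Finally, for the Kobayashi pseudodistance and entire curves: $d_{\mathrm{Kob}}$ vanishes on each $V_j \cong V$ since $V$ is abelian. For an entire curve $f \colon \mathbf{C} \to X$, the composition $\alpha \circ f \colon \mathbf{C} \to A$ has Zariski closure equal to a translate of an abelian subvariety of $A$ by Bloch's theorem; simplicity of $A$ forces this closure to be a point or all of $A$, and the latter is excluded since $f(\mathbf{C}) \subset X_0 \subsetneq A$. So $\alpha \circ f$ is constant and $f(\mathbf{C})$ lies in some fibre of $\alpha$, which is either a point or a $V_j$; by Bloch applied inside the simple abelian $V$, a non-constant $f$ is Zariski dense in $V_j \cong V$. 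Since every entire curve in $X$ therefore lies in $W$, the manifold $X$ is hyperbolic modulo $W$ in the sense of Kobayashi--Kiernan, which is exactly the assertion that $d_{\mathrm{Kob}}$ is non-degenerate on $X \setminus W$ and vanishes on $W$. The main obstacle I expect is the input from Kawamata's refinement of Mori theory (covering the exceptional locus of $|mK_X|$ by rational curves) used in the ampleness step; once this is in hand, the rest combines the normal-bundle calculation, Ueno's structure theorem on subvarieties of simple abelian varieties, and Bloch's theorem on entire curves in abelian varieties in a direct way.
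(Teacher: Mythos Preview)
Most of your argument is correct and parallels the paper closely: the use of Mori plus the basepoint-free theorem for ampleness of $K_X$, Ueno's theorem for the classification of subvarieties, and Bloch's theorem for entire curves are exactly what the paper does. There are, however, two genuine gaps.

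First, your claim that ``bigness [of $\Omega_X^1$] follows from $K_X$ big'' is false in general: a nef vector bundle with big determinant need not be big (for instance $T_{\mathbf{P}^2}(-1)$ on $\mathbf{P}^2$ is globally generated, hence nef, with ample determinant $\mathcal{O}(1)$, but has vanishing top Segre class and is therefore not big). Your nefness argument via the generically surjective map $\mathcal{O}_X^{\oplus(n+1)}\to\Omega_X^1$ together with the conormal sequence on each $V_j$ is fine, but it does not yield bigness. The paper instead exploits the embedding $X\subset B=A\times V$ and applies Debarre's criterion \cite[Proposition~5]{D}: one checks that the difference map $X\times X\to B$ is generically finite, using that the only abelian subvarieties of $B$ are $A$ and $V$ (they are simple of different dimensions) and that $X$ surjects onto $V$ and maps birationally to $X_0\subset A$. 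This gives nef \emph{and} big in one stroke.

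Second, your final step asserts that ``every entire curve in $X$ lies in $W$'' implies ``$d_{\mathrm{Kob}}$ is non-degenerate on $X\setminus W$''. This is the implication \emph{Brody hyperbolic modulo $W$} $\Rightarrow$ \emph{Kobayashi hyperbolic modulo $W$}, which is not known in general for compact $X$: Brody's theorem handles the absolute case $W=\emptyset$, but the relative version remains open. The paper avoids this: your own Bloch argument already shows that $X_0\subset A$ is Brody hyperbolic, hence by the \emph{absolute} Brody theorem Kobayashi hyperbolic; the distance-decreasing property then gives $d_X\geq\alpha^*(d_{X_0})$, which is positive off $W$. The paper goes on to show the equality $d_X=\alpha^*(d_{X_0})$ by lifting chains of discs from $X_0$ to $X$ and using $d_V\equiv 0$ to connect lifts inside each $V_j$.
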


\begin{proof} Since $A$ is simple, any of its subvarieties, and so in particular $X_0$, are of general type. So is thus the manifold $X$. Since neither $A$ nor $V$ carries rational curves, $K_X$ is ample (nef by \cite{Mori}, ample by \cite{kaw'}). Since $A$ is simple, $X_0$ does not carry any rational curve or Abelian subvariety. Thus $X_0$ is hyperbolic, and any entire curve of $X$ must be contained in the exceptional locus of $\alpha:X\longrightarrow X_0$, which is the union of the $V_j's$. Since each $V_j$ is isomorphic to the simple Abelian variety $V$, each entire curve of $X$ is contained and Zariski dense in one of the $V_j's$ (by Bloch's theorem).

We prove that the cotangent bundle of $X$ is nef and big. By \cite{D}, Proposition 5, it is sufficient for this to check that $X\subset B=A\times V$ is non-degenerate, that is, that the natural difference map $\delta:X\times X\to B$ defined by  $\delta(x,y):=x-y$ is generically finite, using the group law on $B$, whatever the origin. Since $A$ and $V$ are simple and not isogeneous (they have different dimensions), the only Abelian subvarieties of $B$ are $A$ and $V$, and so  we only need to check that for both projections from $B$ to $A$ and $V$, the image of $X$ is either everything, or of the same dimension as $X$. But this is obvious, since $X$ maps surjectively onto $V$, and birationally to $X_0$ in $A$. 

We now prove that the Kobayashi pseudodistance on $X$ is a distance on $X\setminus W$. To this end, we observe that $X_0\subset A$ is Brody hyperbolic, and therefore, by the theorems of Brody \cite[3.6.3]{KobayashiHyperbolicComplexSpaces} and Bloch \cite[3.9.19]{KobayashiHyperbolicComplexSpaces}, it follows that $X_0$ is Kobayashi hyperbolic. Let $d_{\bullet}$ denote the Kobayashi pseudodistance. The decreasing property of $d_{\bullet}$ shows that $d_X\geq \alpha^*(d_{X_0})$. On the other hand, $d_{X}$ vanishes on $W$, and so $d_X=\alpha^*(d_{X_0})$, which is a metric on $X_0$. This can be seen by lifting chains of discs in $X_0$ to $X$ and using the fact that $d_V$ is identically zero, so that liftings of discs abutting to distinct points in $V$ can be connected inside $V$ by adding discs, without increasing the pseudodistance. We leave the easy details to the reader. 
\end{proof}

\begin{remark} If a projective manifold (Calabi--Yau or hyperk\"ahler) $V$ exists which is not hyperbolic, but contains no rational curve and no image of an Abelian variety, the preceding construction gives a general type manifold $X$ embedded in $A\times V$ which is not hyperbolic, but violates Lang conjecture (the conditional Theorem \ref{hyp}). 
\end{remark}

\begin{proof}[Proof of Theorem~\ref{ex'}.] Choose $V$ to be a projective manifold containing a Zariski dense entire curve in the construction of $X\subset B:=A\times V$ in the proof of Theorem \ref{ex}. The arguments given there apply (except that now $V$ may contain rational curves, so that $X$ is only of general type, and does not need to have a nef and big cotangent bundle). Indeed, $X\to X_0\subset A$ is a blow-up in finitely many points and a smooth model of $X_0$. Since $A$ is simple, $X_0$ and its smooth models are of general type, and do not contain any entire curve (by Bloch's theorem again). By brody theorem, $d_{X_0}$ is a metric, and $d_{X}\geq \alpha^*(d_{X_0})$. But we do not know whether we have equality. At least, $d_X$ is a distance on $X\setminus W$. \end{proof}

\begin{remark}\label{d} 
If $d_V$ vanishes identically, then we still have, as above, the equality $d_{X}= \alpha^*(d_{X_0})$. If $V$ is `special' in the sense of \cite{Ca04}, it is conjectured that $d_V$ vanishes identically. Hence, it is conjectured that this equality holds. If $V$ contains a dense (for the metric topology) entire curve, then $d_V$ vanishes, and $d_{X}= \alpha^*(d_{X_0})$.
\end{remark}

\begin{question} Do the $X's$ constructed from $V$ and $A$ above have a big cotangent bundle? We leave this question open.
\end{question}

\begin{remark}\label{exc set'} Let us consider the product $X\times Z$ where $X\subset A\times V$ is one of the examples constructed in the proof of Theorems \ref{ex} or \ref{ex'}, and $Z$ is canonically polarised and hyperbolic. The exceptional set (in the sense of S. Lang) of $X\times Z$ is then a finite number of copies of $V\times Z$.
\end{remark}

\section{Remarks on metrics with quasi-negative holomorphic sectional curvature}

\noindent Recall that the Kobayashi--Lang conjecture predicts that a compact hyperbolic K\"ahler manifold is canonically polarised, which is the case if it admits a K\"ahler metric with negative holomorphic sectional curvature (\cite{WuYau,TosattiYang}). This remains true for particular classes of pluriclosed manifolds (see \cite{BroderStanfield,LeeStreets}). 

It is well known that canonically polarised manifolds need not be hyperbolic: Fermat hypersurfaces in $\mathbf{P}^n$ of degree $d>n+1$ support rational curves.  Hirzebruch \cite{Hirzebruch} and,  more recently,  Sarem \cite{Sarem} (building on \cite{HS}),  have produced canonically polarised manifolds with no rational curves.  These examples are toroidal compactifications of ball quotients;  hence, they admit elliptic curves and are not hyperbolic. 

\begin{remark}\label{rem ex} The referee showed us easier examples of non-hyperbolic surfaces with no rational curves, but with ample canonical bundle: the second symmetric power of a bi-elliptic, non-hyperelliptic curve $C$ of genus at least $3$. The exceptional locus is then a finite union of elliptic curves.
\end{remark}

The examples constructed in Theorem~\ref{ex} however show that canonically polarised manifolds may have arbitrary simple Abelian variety as their exceptional loci.

The examples constructed by Hirzebruch \cite{Hirzebruch} and Sarem \cite{Sarem} are also of interest since they provide examples of compact K\"ahler manifolds with (a K\"ahler metric of) quasi-negative holomorphic sectional curvature\footnote{The holomorphic sectional curvature $\text{HSC} : T_X^{1,0} \longrightarrow \mathbf{R}$ is said to be \textit{quasi-negative}  if $\text{HSC}(\xi_x) \leq 0$ for all $\xi_x \in T_{X,x}^{1,0}$ and all $x \in X$,  and there exists some $x \in X$ such that $\max_{\xi_x \in T_{X,x}^{1,0} \setminus \{ 0 \}} \text{HSC}(\xi_x) <0$.},  but no (Hermitian) metric of negative holomorphic sectional curvature.  Since these examples are compactifications of ball quotients $\mathbf{B}^n / \Gamma$,  it is immediate that the (holomorphic sectional) curvature is negative on $\mathbf{B}^n/\Gamma$.  Hence,  the difficulty in establishing the quasi-negative curvature lies in showing that the curvature is globally non-positive.

\begin{proposition}\label{prop:ex+curv}
The projective manifolds $X$ constructed in Theorem~\ref{ex} have K\"ahler metrics of nonpositive bisectional curvature and pseudoconvex Finsler metrics of quasi-negative holomorphic sectional curvature.  If $\dim X =2$,  then $X$ has no K\"ahler metric of nonpositive sectional curvature.  

\end{proposition}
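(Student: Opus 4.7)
For the K\"ahler metric of non-positive holomorphic bisectional curvature, I would pull back the flat product K\"ahler metric of $A \times V$ to the complex submanifold $X \subset A \times V$ identified in the proof of Theorem~\ref{ex}. Both factors being Abelian varieties, the ambient metric is flat, and the Gauss equation for K\"ahler submanifolds expresses the induced bisectional curvature as the ambient bisectional minus a non-negative second-fundamental-form correction, hence non-positive everywhere on $X$.

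For the pseudoconvex Finsler metric with quasi-negative holomorphic sectional curvature, I would exploit the nefness and bigness of $\Omega_X$ established in Proposition~\ref{lem ex}; equivalently, $\mathcal{O}_{\mathbf{P}(\Omega_X)}(1)$ is nef and big on the projectivized cotangent bundle. By Kodaira's lemma, some positive multiple of $\mathcal{O}(1)$ decomposes as $\pi^{*}H + D$ with $H$ ample on $X$ and $D$ an effective divisor on $\mathbf{P}(\Omega_X)$. Tensoring a smooth positively curved Hermitian metric on $\pi^{*}H$ with the singular metric on $\mathcal{O}(D)$ defined by the canonical section yields a possibly singular metric on $\mathcal{O}(1)$ whose curvature is semi-positive everywhere and strictly positive off the base locus of $D$. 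Passing to the dual Finsler metric on $T_X$ translates this into a pseudoconvex Finsler metric whose holomorphic sectional curvature is non-positive everywhere and strictly negative on the Zariski-open complement of the base locus---exactly the quasi-negativity condition. This is essentially the construction used by Demailly, Cadorel, and others.

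For the non-existence statement in complex dimension $2$, suppose for contradiction that $g$ is a K\"ahler metric on $X$ with non-positive Riemannian sectional curvature, and let $E = V_j \subset X$ be one of the exceptional elliptic curves. Adjunction with $K_X$ ample gives $E^{2} = -K_X \cdot E < 0$. Non-positive Riemannian sectional curvature implies non-positive holomorphic bisectional curvature, so the Gauss equation yields non-positive Gauss curvature on $E$; combined with Gauss--Bonnet and $\chi(E) = 0$, this forces the induced metric on $E$ to be flat and $E$ to be totally geodesic in $(X, g)$. Consequently, the holomorphic sectional curvature of $X$ vanishes identically along $TE$. To conclude, one must upgrade this to the flatness of the normal bundle $N_{E/X}$, which would give the contradiction $E^{2} = \deg N_{E/X} = 0$. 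The main obstacle lies in precisely this upgrade: non-positive bisectional alone yields only $\deg N_{E/X} \le 0$, so one must exploit the strictly stronger non-positive Riemannian sectional curvature via the decomposition $R^{X}(v, \bar v, w, \bar w) = K(v, w) + K(v, Jw)$ into real sectional curvatures. The plan is to argue that at the maximum (equal to $0$) of the holomorphic sectional curvature along $TE$, a Hessian/perturbation analysis, together with the pointwise non-positivity of each real sectional term, forces $R^{X}(v, \bar v, w, \bar w) = 0$ for all $v \in TE$ and $w \in N_{E/X}$; since $E$ is totally geodesic, this is exactly the Chern curvature of $N_{E/X}$, giving the flatness of the normal bundle and the desired contradiction.
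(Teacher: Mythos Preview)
Your treatment of the first assertion agrees with the paper's.

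For the Finsler metric, your route differs from the paper's and has a gap. With the decomposition $m\mathcal{O}(1)\sim \pi^{*}H+D$ and $H$ ample on $X$, the curvature of your singular metric is $\pi^{*}\omega_H+[D]$; off $\operatorname{supp}(D)$ this is just $\pi^{*}\omega_H$, which is degenerate along the fibres of $\pi$, so the claimed strict positivity there fails and the dual Finsler structure is neither strictly pseudoconvex nor negatively curved. Even with the correct Kodaira decomposition $m\mathcal{O}(1)\sim A+D$ with $A$ ample on $\mathbf{P}(\Omega_X)$, the metric you obtain is genuinely singular along $D$, not a pseudoconvex Finsler metric in Kobayashi's sense. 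The paper instead exploits that $X_0\subset A$ has \emph{ample} cotangent bundle, so Kobayashi's construction furnishes an honest pseudoconvex Finsler metric $F$ of negative curvature on the regular locus $X_0^{\circ}$. One pulls $F$ back to $U=\alpha^{-1}(X_0^{\circ})\subset X$, multiplies by a cutoff $\eta\in C_c^{\infty}(U)$, and applies Wu's theorem on the holomorphic sectional curvature of a sum to conclude that $\delta+\eta F$ (with $\delta$ the flat induced metric) is pseudoconvex with quasi-negative holomorphic sectional curvature.

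For the surface case, the paper simply cites Zheng's theorem: a canonically polarised K\"ahler surface admitting a K\"ahler metric of non-positive Riemannian sectional curvature is hyperbolic; since $X$ contains the elliptic curves $V_j$, it is not. Your direct argument stalls exactly where you say it does. Expanding $\operatorname{HSC}(v+tw)$ to second order at its maximum $0$ gives only $2R(v,\bar v,w,\bar w)\le -|R(w,\bar v,w,\bar v)|\le 0$, i.e.\ nothing beyond the bisectional inequality already known, and this is perfectly consistent with $E^{2}<0$. The passage from non-positive sectional curvature to flatness of $N_{E/X}$ is precisely the non-elementary content of Zheng's result (which proceeds via the first Pontrjagin form, not a local perturbation).
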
 \begin{proof}
Maintaining the previous notation,  let us recall that $X$ is a submanifold of $A \times V$,  where $A$ and $V$ are Abelian varieties.  We also constructed $X$ as the blow-up of $X_0 : = \psi^{-1}(W) \subset A$ at $\psi^{-1}(v)$.  We will write $\pi$ for the blow-up map.  Let $\delta$ be the metric on $X$ induced from a flat K\"ahler metric on $A \times V$.  From the curvature decreasing property of the holomorphic bisectional curvature,  it follows immediately that $X$ has nonpositive holomorphic bisectional curvature.  

Let $X_0^{\circ}$ denote the regular locus of $X_0$.  We note that $X_0$ has ample cotangent bundle,  and Kobayashi's construction \cite{KobayashiFinsler} yields a pseudoconvex Finsler metric with negative curvature on $X_0$.  We leave the reader to consult \cite{KobayashiFinsler} for the definitions,  where they are well presented.  We note, however,  that what we call `pseudoconvex',  Kobayashi refers to as `convex' in \cite{KobayashiFinsler}.

Let $U : = \pi^{-1}(X_0^{\circ}) \subseteq X$.  Let $\eta \in \mathcal{C}_0^{\infty}(U)$ be a smooth cut-off function,  compactly supported in $U$.  Let $F$ denote the (pullback to $U$) of the negatively curved pseudoconvex Finsler metric on $X_0^{\circ}$.  In particular,  the holomorphic sectional curvature (interpreted in the sense of Wu \cite{Wu}) of $\eta F$ is negative on the support of $\eta$.   We then apply Wu's theorem \cite{Wu} on the holomorphic sectional curvature of the sum of (Finsler) metrics to deduce that the pseudoconvex Finsler metric $\delta + \eta F$ has quasi-negative holomorphic sectional curvature.

The last assertion follows from a result of Zheng \cite{Zheng} which states (in particular) that a canonically polarised surface with a K\"ahler metric of nonpositive sectional curvature is hyperbolic. 
\end{proof}

From Proposition~\ref{prop:ex+curv} (and motivated by questions in \cite{DiverioSurvey,DT}) we naturally propose the following question: 

\begin{question}\label{QN}
Do the projective manifolds $X$ constructed in Theorem~\ref{ex} support K\"ahler or Hermitian metrics of quasi-negative holomorphic sectional curvature?
\end{question}

The toroidal compactifications of ball quotients considered in \cite{Hirzebruch, Sarem} have quasi-negative holomorphic bisectional curvature,  but no Hermitian metrics of negative holomorphic sectional curvature (since they are not hyperbolic!).  In this case,  the difficulty concentrates in showing that the negatively curved metrics on the ball quotients retain globally nonpositive curvature on the compactification (c.f., \cite{HS}).  The difficulty in our situation is reversed: The examples constructed in Theorem~\ref{ex} are submanifolds of Abelian varieties and thus,  have globally nonpositive holomorphic bisectional curvature.  Hence, the problem is concentrated on producing strict negativity at some point.

\end{document}